\def\deltas{{{\delta_*}}}
\def\cA{{\mathcal A}}
\def\ucA{\underline{\mathcal A}}
\def\barS{{\bar{S}}}
\def\toi{{\,t \in [0,\infty)}}
\def\BE{\begin{equation}}
\def\EE{\end{equation}}
\def\l({\left(}
\def\r){\right)}
\def\toT{\, t \in [0,T]}
\def\sbg|{\,\bigg|\,}
\def\bX{{{\mbox {\boldmath $X$}}}}
\def\bM{{{\mbox {\boldmath $M$}}}}
\def\bx{{{\mbox {\boldmath $x$}}}}
\def\bb{{{\mbox {\boldmath $b$}}}}
\def\ba{{{\mbox {\boldmath $a$}}}}
\def\bW{{{\mbox {\boldmath $W$}}}}
\def\psdmatd{\overline{S^+_d}}
\def\bix{{{\mbox {\boldmath \scriptsize{$x$}}}}}
\def\matd{\mathcal{M}_d}
\def\bB{{{\mbox {\boldmath $B$}}}}
\def\bA{{{\mbox {\boldmath $A$}}}}
\def\bI{{{\mbox {\boldmath $I$}}}}
\def\vo{{{\mbox {\boldmath $0$}}}}
\def\vSigma{{{\mbox {\boldmath $\Sigma$}}}}
\def\pdmatd{S^+_d}
\def\WISt{WIS_d(\bx,\alpha,\bb,\ba;t)}
\def\vPsi{{{\mbox {\boldmath$\Psi$}}}}  
\def\cN{{\mathcal N}}
\def\vTheta{{{\mbox {\boldmath$\Theta$}}}}  
\def\bS{{{\mbox {\boldmath $S$}}}}
\def\bY{{{\mbox {\boldmath $Y$}}}}
\def\bV{{{\mbox {\boldmath $V$}}}}
\def\bQ{{{\mbox {\boldmath $Q$}}}}
\begin{document}

\title*{Computing Functionals of Multidimensional Diffusions via Monte Carlo Methods}
\author{Jan Baldeaux and Eckhard Platen}
\institute{Jan Baldeaux \at University of Technology Sydney,
Finance Discipline Group, PO Box 123, Broadway, NSW,
2007, Australia \email{Jan.Baldeaux@uts.edu.au}
\and Eckhard Platen \at University of Technology Sydney,
Finance Discipline Group and School of Mathematical Sciences, PO Box 123, Broadway, NSW,
2007, Australia \email{Eckhard.Platen@uts.edu.au}}
%
%
\maketitle

\abstract*{Each chapter should be preceded by an abstract (10--15 lines long) that summarizes the content. The abstract will appear \textit{online} at \url{www.SpringerLink.com} and be available with unrestricted access. This allows unregistered users to read the abstract as a teaser for the complete chapter. As a general rule the abstracts will not appear in the printed version of your book unless it is the style of your particular book or that of the series to which your book belongs.
Please use the 'starred' version of the new Springer \texttt{abstract} command for typesetting the text of the online abstracts (cf. source file of this chapter template \texttt{abstract}) and include them with the source files of your manuscript. Use the plain \texttt{abstract} command if the abstract is also to appear in the printed version of the book.}

\abstract{We discuss suitable classes of diffusion processes, for which functionals relevant to finance can be computed via Monte Carlo methods. In particular, we construct exact simulation schemes for processes from this class. However, should the finance problem under consideration require e.g. continuous monitoring of the processes, the simulation algorithm can easily be embedded in a multilevel Monte Carlo scheme. We choose to introduce the finance problems under the benchmark approach, and find that this approach allows us to exploit conveniently the analytical tractability of these diffusion processes.}

\section{Introduction}
\label{secintro}

In mathematical finance, the pricing of financial derivatives can under suitable conditions be shown to amount to the computation of an expected value, see e.g. \cite{MR05}, \cite{PH10}. Depending on the financial derivative and the model under consideration, it might not be possible to compute the expected value explicitly, however, numerical methods have to be invoked. A candidate for the computation of such expectations is the Monte Carlo method, see e.g. \cite{Boyle77}, \cite{G04}, and \cite{KP99}. Applying the Monte Carlo method typically entails the sampling of the distribution of the relevant financial state variables, e.g. an equity index, a short rate, or a commodity price. It is then, of course, desirable to have at one's disposal a recipe for drawing samples from the relevant distributions. In case these distributions are known, one refers to exact simulation schemes, see e.g. \cite{PB10}, but also  \cite{BeskosRo05}, \cite{BeskosPaRo06}, \cite{BeskosPaRo08}, and \cite{Chen08}, for further references on exact simulation schemes. If exact simulation schemes are not applicable, discrete time approximations, as analyzed in \cite{KP99} and \cite{PB10} become relevant. In recent years, it has been shown under certain assumptions that using the multilevel Monte Carlo method, see \cite{Giles08} and also \cite{H98}, \cite{H01}, the standard Monte Carlo convergence rate, achieved by exact simulation schemes, can be recovered.

For modeling financial quantities of interest, it is important to know a priori if exact simulation schemes exist, so that financial derivatives can be priced, even if expected values cannot be computed explicitly. In this paper, we discuss classes of stochastic processes for which this is the case. For one-dimensional diffusions, Lie symmetry analysis, see \cite{BlumanKum89}, and \cite{O93} turns out to be a useful tool. Besides allowing one to discover transition densities, see \cite{CraddockPla04}, it also allows us to compute Laplace transforms of important multidimensional functionals, see e.g. \cite{CraddockLe09}. In particular, we find that squared Bessel processes fall into the class of diffusions that can be handled well via Lie symmetry methods.

The Wishart process, \cite{Bru91}, is the multidimensional extension of the squared Bessel process. It turns out, see \cite{GouSuf03} and \cite{GouSuf04}, that Wishart processes are affine processes, i.e. their characteristic function is exponentially affine in the state variables. We point out that in \cite{GouSuf03}, and \cite{GouSuf04} the concept of an affine process was generalized from real-valued processes to matrix-valued processes, where the latter category covers Wishart processes. Furthermore, the characteristic function can be computed explicitly, see \cite{GouSuf03}, and \cite{GouSuf04}. Finally, we remark that in \cite{AhdidaAlfonsi10} an exact simulation scheme for Wishart processes was presented.

Modeling financial quantities, one aims for models which provide an accurate reflection of reality, whilst at the same time retaining analytical tractability. The benchmark approach, see \cite{PH10}, offers a unified framework to derivative pricing, risk management, and portfolio optimization. It allows us to use a much wider range of empirically supported models than under the classical no-arbitrage approach. At the heart of the benchmark approach sits the growth optimal portfolio (GOP). It is the portfolio which maximizes expected log-utility from terminal wealth. In particular, the benchmark approach uses the GOP as num\'eraire and the real world probability for taking expectations. We find that the class of processes for which exact simulation is possible is easily accommodated under the benchmark approach, which we illustrate using examples.

The remaining structure of the paper is as follows: In Section \ref{secBMApproach} we introduce the benchmark approach using a particular model for illustration, the minimal market model (MMM), see \cite{PH10}. Section \ref{secLieSym} introduces Lie symmetry methods and discusses how they can be used in the context of the benchmark approach. Section \ref{secWishproc} presents Wishart processes and shows how they can be used to extend the MMM. Section \ref{secconc} concludes the paper.

\section{Benchmark Approach} \label{secBMApproach}

The GOP plays a pivotal role as benchmark and num\'eraire under the benchmark approach. It also enjoys a prominent position in the finance literature, see \cite{Kelly56}, but also \cite{Breiman60}, \cite{Latane59}, \cite{Markowitz76}, \cite{Long90}, \cite{MacLeanThoZie11}, and \cite{Thorp61}. The benchmark approach uses the GOP as the num\'eraire. Since the GOP is the num\'eraire portfolio, see \cite{Long90}, contingent claims are priced under the real world probability measure. This avoids the restrictive assumption on the existence of an equivalent risk-neutral probability measure. We remark, it is argued in \cite{PH10} that the existence of such a measure may not be a realistic assumption. Finally, we emphasize that the benchmark approach can be seen as a generalization of risk-neutral pricing, as well as other pricing approaches, such as actuarial pricing, see \cite{PH10}.

To fix ideas in a simple manner, we model a well-diversified index, which we interpret as the GOP, using the stylized version of the MMM, see \cite{PH10}. Though parsimonious, this model is able to capture important empirical characteristics of well-diversified indices. It has subsequently been extended in several ways, see e.g. \cite{PH10}, and also \cite{BIP12}. To be precise, consider a filtered probability space $(\Omega,\cA,\ucA,P)$, where the filtration $\ucA=(\cA_t)_\toi$ is
assumed to satisfy the usual conditions, which carries for simplicity one source of uncertainty, a standard Brownian motion $W=\{W_t,\toi\}$. The deterministic savings account is modeled using the differential equation
\begin{displaymath}
dS^0_t = r\,S^0_t\,dt \, ,
\end{displaymath}
for $\toi$ with $S^0_0=1$, where $r$ denotes the constant short rate. Next, we introduce the model for the well diversified index, the GOP $S^{\deltas}_t$, which is given by the expression
 \BE
 \label{eqdefGOP}
 S^\deltas_t = S^0_t\,\barS^\deltas_t = S^0_t\,Y_t\,\alpha^\deltas_t \, .
 \EE
Here $Y_t = \frac{\alpha^{\deltas}_t}{\barS^\deltas_t}$ is a square-root process of dimension four, satisfying the stochastic differential equation (SDE)
\BE \label{eqsqrootproc}
dY_t = (1-\eta\,Y_t)\,dt + \sqrt{Y_t}\,dW_t \, ,
\EE
for $\toi$ with initial value $Y_0 > 0$ and net growth rate $\eta > 0$. The deterministic function of time $\alpha^\deltas_t $ is given by the exponential function
\begin{displaymath}
\alpha^{\deltas}_t = \alpha_0 \exp \left\{ \eta t \right\} \, ,
\end{displaymath}
with scaling parameter $\alpha_0 > 0$. Furthermore, it can be shown by the It\^o formula that $\alpha^{\deltas}_t$ is the drift at time $t$ of the discounted GOP
\begin{displaymath}
\barS^\deltas_t := \frac{S^{\deltas}_t}{S^0_t} \, ,
\end{displaymath}
so that the parameters of the model are $S^{\deltas}_0$, $\alpha_0$, $\eta$, and $r$. We note that one obtains for the GOP the SDE
\BE \label{eqsdeGOP}
d S^{\deltas}_t = S^{\deltas}_t \l(  \l( r + \frac{1}{Y_t}  \r) dt + \sqrt{\frac{1}{Y_t}} d W_t   \r) \, ,
\EE
which illustrates the well-observed leverage effect, since as the index $S^{\deltas}_t$ decreases, its volatility $\frac{1}{\sqrt{Y_t}}=\sqrt{\frac{\alpha^{\deltas}_t}{\barS^\deltas_t}}$ increases and vice versa.

It is useful to define the transformed time $\varphi(t)$ as
\begin{displaymath}
\varphi(t) = \varphi(0) + \frac{1}{4} \int^t_0 \alpha^{\deltas}_s ds \, .
\end{displaymath}
Setting
\begin{displaymath}
X_{\varphi(t)} = \barS^\deltas_t \, ,
\end{displaymath}
we obtain the SDE
\BE \label{eqdiscountedGOPsqBessel}
d X_{\varphi(t)} = 4 d \varphi(t) + 2 \sqrt{X_{\varphi(t)}} d W_{\varphi(t)} \, ,
\EE
where
\begin{displaymath}
d W_{\varphi(t)} = \sqrt{\frac{\alpha^{\deltas}_t}{4} } d W_t
\end{displaymath}
for $t \in [0,\infty)$. This shows that $\,X=\{X_\varphi,\,\varphi \in [\varphi(0),\infty)\}$ is a time transformed squared Bessel process of dimension four and $\,W=\{W_\varphi,\,\varphi \in [\varphi(0),\infty)\}$ is a Wiener process in the transformed $\varphi$-time $\varphi(t) \in [\varphi(0), \infty)$, see \cite{RevuzYor99}. The merit of the dynamics given by (\ref{eqdiscountedGOPsqBessel}) is that transition densities of squared Bessel processes are well studied; in fact we derive them in Section \ref{secLieSym} using Lie symmetry methods.

We remark that the MMM does not admit a risk-neutral probability measure because the Radon-Nikodym derivative $\Lambda_t = \frac{\barS^\deltas_0}{\barS^\deltas_t}$ of the putative risk-neutral measure, which is the inverse of a time transformed squared Bessel process of dimension four, is a strict local martingale and not a martingale, see \cite{RevuzYor99}. On the other hand, $S^\deltas$, is the num\'eraire portfolio, and thus, when used as num\'eraire to denominate any nonnegative portfolio, yields a supermartingale under the real-world probability measure $P$. This implies that the financial market under consideration is free of those arbitrage opportunities that are economically meaningful in the sense that they would allow to create strictly positive wealth out of zero initial wealth via a nonnegative portfolio, that is, under limited liability, see \cite{LoewensteinWi00a} and \cite{PH10}. This also means that we can price contingent claims under $P$ employing $S^{\deltas}$ as the num\'eraire. This pricing concept is referred to as real-world pricing, which we now recall, see \cite{PH10}: for a nonnegative contingent claim with payoff $H$ at maturity $T$, where $H$ is $\cA_T$-measurable, and $E \l( \frac{ H }{S^\deltas_T} \r) < \infty$, we define the value process at time $\toT$ by
\BE \label{eqrealworldprice} V_t := S^{\deltas}_t E \l( \frac{H}{S^{\deltas}_T} \sbg|\cA_t  \r) \, .
\EE
Note that since $V_T=H$, the benchmarked price process $\frac{V_t}{S^{\deltas}_t}$ is an $\l(\ucA, P\r)$-martingale. Formula (\ref{eqrealworldprice}) represents the real-world pricing formula, which provides the minimal possible price and will be used in this paper to price derivatives. If the expectation in equation (\ref{eqrealworldprice}) cannot be computed explicitly, one can resort to Monte Carlo methods. In that case, it is particularly convenient, if the relevant financial quantities, such as $S^{\deltas}_T$ can be simulated exactly. In the next section, we derive the transition density of $S^{\deltas}$ via Lie symmetry methods, which then allows us to simulate $S^{\deltas}_T$ exactly. Note, in Section \ref{secWishproc}, we generalize the MMM to a multidimensional setting and present a suitable exact simulation algorithm.

\section{Lie Symmetry Methods} \label{secLieSym}

The aim of this section is to present Lie symmetry methods as an effective tool for designing tractable models in mathematical finance. Tractable models are, in particular, useful for the evaluation of derivatives and risk measures in mathematical finance. We point out that in the literature, Lie symmetry methods have been used to solve mathematical finance problems explicitly, see e.g. \cite{CraddockLe07}, and \cite{Itkin12}. Within the current paper we want to demonstrate that they can also be used to design efficient Monte Carlo algorithms for complex multidimensional functionals.

The advantage of the use of Lie symmetry methods is that it is straightforward to check whether the method is applicable or not. If the method is applicable, then the relevant solution or its Laplace transform has usually already been obtained in the literature or can be systematically derived. We will demonstrate this in finance applications using the benchmark approach for pricing.

We now follow \cite{CraddockLe09}, and recall that if the solution of the Cauchy problem
\begin{eqnarray}
u_t &=& b x^{\gamma} u_{x x} + f(x) u_x - g(x) u \, , \, x > 0 \, , \, t \geq 0 \, ,  \label{eqPDEFC1}
\\ u(x,0) &=& \varphi(x) \, , \, x \in \Omega =[0, \infty) \, ,
\end{eqnarray}
is unique, then by using the Feynman-Kac formula it is given by the expectation
\begin{displaymath}
u(x,t) = E \left( \exp \left( - \int^t_0 g(X_s) ds \right) \varphi(X_t) \right) \, ,
\end{displaymath}
where $X_0=x$, and the stochastic process $X= \left\{ X_t \, , \, t \geq 0 \right\}$ satisfies the SDE
\begin{displaymath}
d X_t = f(X_t) dt + \sqrt{2 b X^{\gamma}_t}  dW_t \, .
\end{displaymath}
We now briefly describe the intuition behind the application of Lie Symmetry methods to problems from mathematical finance, in particular, the integral transform method developed in \cite{Lennox12}, and the types of results this approach can produce. Lie's method allows us to find vector fields
\begin{displaymath}
\mathbf{v} = \xi(x,y,u) \partial_x + \tau(x,t,u) \partial_t + \phi(x,t,u) \partial_u \, ,
\end{displaymath}
which generate one parameter Lie groups that preserve solutions of (\ref{eqPDEFC1}). It is standard to denote the action of $\mathbf{v}$ on solutions $u(x,t)$ of (\ref{eqPDEFC1}) by
\begin{equation} \label{eqactiononu}
\rho ( \exp \epsilon \mathbf{v} ) u(x,t) = \sigma(x,t; \epsilon) u (a_1(x,t; \epsilon) , a_2 (x,t; \epsilon))
\end{equation}
for some functions $\sigma$, $a_1$, and $a_2$, where $\epsilon$ is the parameter of the group, $\sigma$ is referred to as the multiplier, and $a_1$ and $a_2$ are changes of variables of the symmetry. For the applications we have in mind, $\epsilon$ and $\sigma$ are of crucial importance, $\epsilon$ will play the role of the transform parameter of the Fourier or Laplace transform and $\sigma$ will usually be the Fourier or Laplace transform of the transition density. Following \cite{CraddockLe07}, we assume that (\ref{eqPDEFC1}) has a fundamental solution $p(t,x,y)$. For this paper, it suffices to recall that we can express a solution $u(x,t)$ of the PDE (\ref{eqPDEFC1}) subject to the initial condition $u(x,0)=f(x)$ in the form
\BE \label{eqtransformconnect2}
u(x,t) = \int_{\Omega} f(y) p(t,x,y) dy \, ,
\EE
where $p(t,x,y)$ is a fundamental solution of (\ref{eqPDEFC1}). %
The key idea of the transform method is to connect (\ref{eqactiononu}) and (\ref{eqtransformconnect2}). Now consider a stationary, i.e. a time-independent solution, say $u_0(x)$. Of course, (\ref{eqactiononu}) yields
\begin{displaymath}
\rho \l( \exp \epsilon \mathbf{v} \r) u_0(x) = \sigma \l( x , t ; \epsilon \r) u_0 \l( a_1 (x,t ; \varepsilon ) \r) \, ,
\end{displaymath}
which also solves the initial value problem. We now set $t=0$ and use (\ref{eqactiononu}) and (\ref{eqtransformconnect2}) to obtain
\BE \label{eqtransformconnect3}
\int_{\Omega} \sigma(y,0,\epsilon) u_0 \l( a_1 \l( y , 0, ; \epsilon \r) \r) p \l( t,x,y \r) dy = \sigma \l( x,t ; \epsilon \r) u_0 \l( a_1 \l( x, t; \epsilon \r) \r) \, .
\EE
Since $\sigma$, $u_0$, and $a_1$ are known functions, we have a family of integral equations for $p(t,x,y)$. To illustrate this idea using an example, we consider the one-dimensional heat equation
\BE \label{eqonedimheat}
u_t = \frac{1}{2} g^2 u_{xx} \, .
\EE
We will show that if $u(x,t)$ solves (\ref{eqonedimheat}), then for $\epsilon$ sufficiently small, so does
\begin{displaymath}
\tilde{u} (t,z) = \exp \left\{ \frac{\epsilon t^2}{2 g^2} - \frac{z \epsilon}{g^2} \right\} u \l( z - t \epsilon , t \r) \, .
\end{displaymath}
Taking $u_0=1$, (\ref{eqtransformconnect3}) gives
\begin{displaymath}
\int^{\infty}_{-\infty} \exp \left\{ - \frac{y \epsilon}{g^2} \right\} p(t,x,y) dy = \exp \left\{ \frac{ t \epsilon^2 }{2 g^2} - \frac{x \epsilon}{g^2} \right\} \, .
\end{displaymath}
Setting $a=-\frac{\epsilon}{g^2}$, we get
\BE \label{eqmgfGaussian}
\int^{\infty}_{-\infty} \exp \{ a y \} p(t,x,y) dy = \exp \left\{ \frac{a^2 g^2 t}{2} + a x \right\} \, .
\EE
We recognize that (\ref{eqmgfGaussian}) is the moment generating function of the Gaussian distribution, so $p(t,x,y)$ is the Gaussian density with mean $x$ and variance $g^2 t$. We alert the reader to the fact that $\epsilon$ plays the role of the transform parameter and $\sigma$ corresponds to the moment generating function. Finally, we recall a remark from \cite{Craddock09}, namely the fact that Laplace and Fourier transforms can be readily obtained through Lie algebra computations, which suggests a deep relationship between Lie symmetry analysis and harmonic analysis. Lastly, we remark that in order to apply the approach, we require the PDE (\ref{eqPDEFC1}) to have nontrivial symmetries. The approach developed by Craddock and collaborators, see \cite{Craddock09}, \cite{CraddockDo10}, \cite{CraddockLe07}, \cite{CraddockLe09}, and \cite{CraddockPla04}, provides us with the following: A statement confirming if nontrivial symmetries exist and an expression stemming from (\ref{eqtransformconnect3}), which one only needs to invert to obtain $p(t,x,y)$. We first present theoretical results, and then apply these to the case of the MMM. Now we discuss the question whether the PDE (\ref{eqPDEFC1}) has nontrivial symmetries, see \cite{CraddockLe09}, Proposition 2.1.

\begin{theorem} \label{theoexistnontrivsym} If $\gamma \neq 2$, then the PDE
\BE \label{eqtheoexistnontrivsym}
u_t = b x^{\gamma} u_{x x} + f(x) u_x - g(x) u \, , \quad x \geq 0 \, , b > 0
\EE
has a nontrivial Lie symmetry group if and only if $h$ satisfies one of the following families of drift equations
\begin{eqnarray} \label{eqRicattidrift1theoLiesymexistiff}
b x h' - b h + \frac{1}{2} h^2 + 2 b x^{2 - \gamma} g(x) &= 2 b A x^{2-\gamma} + B \, , \\ \label{eqRicattidrift2theoLiesymexistiff}
b x h' - b h + \frac{1}{2} h^2 + 2 b x^{2 - \gamma} g(x) &= \frac{A x^{4-2 \gamma}}{2 \l( 2 - \gamma \r)^2} + \frac{B x^{2- \gamma}}{2 - \gamma} +C \, , \\ \label{eqRicattidrift3theoLiesymexistiff}
b x h' - b h + \frac{1}{2} h^2 + 2 b x^{2 - \gamma} g(x) &= \frac{A x^{4 - 2 \gamma}}{2 \l( 2 - \gamma \r)^2} + \frac{B x^{3 - \frac{3}{2} \gamma}}{3 - \frac{3}{2} \gamma} + \frac{C x^{2- \gamma}}{2 - \gamma} - \kappa \, ,
\end{eqnarray}
with $\kappa = \frac{\gamma}{8} \l( \gamma - 4 \r) b^2$ and $h(x)=x^{1- \gamma} f(x)$.
\end{theorem}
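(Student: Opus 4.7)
The plan is to carry out the standard Lie symmetry analysis of the linear parabolic PDE (\ref{eqtheoexistnontrivsym}) and reduce the existence of nontrivial symmetries to a compatibility condition on the coefficient $f$, which after the substitution $h(x)=x^{1-\gamma}f(x)$ takes the Riccati form on the left-hand side of (\ref{eqRicattidrift1theoLiesymexistiff})--(\ref{eqRicattidrift3theoLiesymexistiff}). Concretely, I would look for an infinitesimal generator
\begin{displaymath}
\mathbf{v} = \xi(x,t,u)\,\partial_x + \tau(x,t,u)\,\partial_t + \phi(x,t,u)\,\partial_u.
\end{displaymath}
Because the PDE is linear in $u$, the standard reduction (see, e.g., the treatment in \cite{BlumanKum89} or \cite{O93}) forces $\tau=\tau(t)$, $\xi=\xi(x,t)$, and $\phi=\alpha(x,t)u+\beta(x,t)$ with $\beta$ itself a solution of (\ref{eqtheoexistnontrivsym}); the $\beta$-part yields only the trivial "superposition" symmetries, so the real content lies in the triple $(\tau,\xi,\alpha)$.

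Next I would apply the second prolongation $\mathrm{pr}^{(2)}\mathbf{v}$ to (\ref{eqtheoexistnontrivsym}), substitute $u_t = b x^{\gamma}u_{xx}+f(x)u_x-g(x)u$ to eliminate $u_t$, and then equate coefficients of the functionally independent monomials in $u,u_x,u_{xx}$. This yields the determining system. The coefficient of $u_{xx}$ forces a simple relation tying $\xi$ to $\tau$ via the power $x^{\gamma}$, from which one extracts $\xi(x,t)=\tfrac{1}{2-\gamma}\,x\,\tau'(t)+\text{(lower order in }t)$. The coefficient of $u_x$ then determines $\alpha$ in terms of $\tau$, $\tau'$ and an integral involving $f$; finally the coefficient of $u^0$ is the compatibility condition that the ansatz actually solves the system.

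The compatibility equation takes the shape of a linear ODE in $t$ for $\tau$ whose coefficients depend on $x$ via $f$ and $g$. Demanding that the $x$- and $t$-dependences separate (which is the only way to obtain a nontrivial $\mathbf{v}$ beyond the always-present generators $\partial_t$ and $u\,\partial_u$) forces $\tau(t)$ to satisfy a constant-coefficient ODE of the form $\tau'''=c\,\tau'$ and, simultaneously, forces the quantity $bxh'-bh+\tfrac12 h^2+2bx^{2-\gamma}g(x)$ to match a specific polynomial in $x^{2-\gamma}$. The three possible degrees of this polynomial, dictated by whether the separation constant $c$ is zero, positive, or tied to the critical value $\kappa=\tfrac{\gamma}{8}(\gamma-4)b^2$, produce exactly the three right-hand sides (\ref{eqRicattidrift1theoLiesymexistiff}), (\ref{eqRicattidrift2theoLiesymexistiff}), (\ref{eqRicattidrift3theoLiesymexistiff}); the substitution $h=x^{1-\gamma}f$ rewrites the condition invariantly and makes the scaling structure transparent, while the hypothesis $\gamma\neq 2$ is what allows the divisions by $2-\gamma$ in the reduction to be legitimate.

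The main obstacle is the "only if" direction: one must prove that the three Riccati families exhaust all possibilities, rather than producing them as three distinct sufficient conditions. This requires a careful case analysis of the admissible $\tau(t)$ governed by $\tau'''=c\tau'$ and a verification, in each case, that the resulting $\mathbf{v}$ is genuinely new (i.e. not a linear combination of $\partial_t$, $u\,\partial_u$, and the $\beta$-symmetries). The bookkeeping in the prolongation is routine but voluminous; the real subtlety is tracking which scaling exponents in $x$ are forced by the $x^{\gamma}$ diffusion and which are free, and confirming that the critical value $\kappa$ for case (\ref{eqRicattidrift3theoLiesymexistiff}) is precisely the one that introduces an extra symmetry not present in cases (\ref{eqRicattidrift1theoLiesymexistiff}) and (\ref{eqRicattidrift2theoLiesymexistiff}).
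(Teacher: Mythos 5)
First, note that the paper itself does not prove this theorem: it is quoted verbatim from Craddock and Lennox (Proposition 2.1 of \cite{CraddockLe09}), so there is no in-paper argument to compare against; the comparison has to be with the standard proof in that reference. There, the argument does not run the prolongation machinery directly on $u_t=bx^\gamma u_{xx}+f(x)u_x-g(x)u$. Instead one uses $\gamma\neq 2$ to change variables (essentially $y=\frac{2x^{(2-\gamma)/2}}{\sqrt{b}\,(2-\gamma)}$ together with a gauge transformation $u\mapsto e^{-F/2b}v$ with $F'=f/x^\gamma$) to reduce the equation to the canonical form $v_t=v_{yy}-Q(y)v$, and then invokes the classical classification of potentials $Q$ for which this equation has nontrivial symmetries (quadratic polynomials in $y$, and quadratic plus inverse-square terms). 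Translating those canonical families of $Q$ back through the change of variables is precisely what produces the three Riccati conditions on $h=x^{1-\gamma}f$, and the constant $\kappa=\frac{\gamma}{8}(\gamma-4)b^2$ appears mechanically from the gauge and Jacobian terms of the reduction.

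Your proposal, by contrast, is a plan rather than a proof, and the gap sits exactly where the theorem's content lies. You describe setting up the determining equations, but you never derive them, and the decisive steps are asserted rather than established: (i) the claim that compatibility forces an equation of the form $\tau'''=c\,\tau'$ and that the three families correspond to the sign or criticality of a separation constant $c$ is not derived, and it is not the mechanism by which the classification actually arises (in the canonical-form analysis the trichotomy comes from the admissible shapes of the potential $Q$, not from a sign split of a separation constant); (ii) the "only if" direction --- exhaustiveness of the three families --- is precisely the hard part, and you acknowledge it as "the main obstacle" without resolving it; (iii) the specific value $\kappa=\frac{\gamma}{8}(\gamma-4)b^2$ is never obtained, even though reproducing that constant is a nontrivial check that the computation has been done correctly. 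A direct-prolongation proof along your lines could in principle be completed, but as written the proposal stops at the point where the actual work begins; to close it you would either have to carry out and solve the full determining system with the case analysis, or (more efficiently, and as in the cited source) reduce to the canonical Schr\"odinger-type form and quote or prove the classification of potentials admitting nontrivial symmetries.
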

For the case $\gamma=2$, a similar result was obtained in \cite{CraddockLe09}, Proposition 2.1. Regarding the first Ricatti equation, (\ref{eqRicattidrift1theoLiesymexistiff}), the following result was described in \cite{CraddockLe09}, Theorem 3.1:

\begin{theorem} \label{theoRicattieq1symmetry} Suppose $\gamma \neq 2$ and $h(x) = x^{1- \gamma} f(x)$ is a solution of the Ricatti equation
\begin{displaymath}
b x h' - b h + \frac{1}{2} h^2 + 2 b x^{2 - \gamma} g(x) = 2 b A x^{2 - \gamma} + B \, .
\end{displaymath}
Then the PDE (\ref{eqtheoexistnontrivsym}) has a symmetry of the form
\begin{eqnarray} \label{eqtheoricatti1Ubar}
\overline{U}_{\varepsilon}(x,t) &= \frac{1}{\l( 1 + 4 \varepsilon t \r)^{\frac{1- \gamma}{2 - \gamma}}} \exp \left\{ \frac{- 4 \varepsilon \l( x^{2 - \gamma} + A b \l( 2 - \gamma \r)^2 t^2 \r) }{b \l( 2 - \gamma \r)^2 \l( 1 + 4 \varepsilon t \r)} \right\}
\\ &\quad \exp \left\{ \frac{1}{2 b} \l( F \l( \frac{x}{ \l( 1+ 4 \varepsilon t \r)^{\frac{2}{2-\gamma}}} \r) - F \l( x \r) \r) \right\}
\\ &\quad  u \l( \frac{x}{\l( 1 + 4 \varepsilon t \r)^{\frac{2}{2 - \gamma}}} , \frac{t}{1+ 4 \varepsilon t} \r) \, ,
\end{eqnarray}
where $F'(x) = f(x) / x^{\gamma}$ and $u$ is a solution of the respective PDE. That is, for $\varepsilon$ sufficiently small, $U_{\varepsilon}$ is a solution of (\ref{eqtheoexistnontrivsym}) whenever $u$ is. If $u(x,t)=u_0(x)$ with $u_0$ an analytic, stationary solution there is a fundamental solution $p(t,x,y)$ of (\ref{eqtheoexistnontrivsym}) such that
\begin{displaymath}
\int^{\infty}_0 \exp \{ - \lambda y^{2-\gamma} \} u_0 \l( y \r) p \l( t,x,y \r) dy = U_{\lambda}(x,t) \, .
\end{displaymath}
Here $U_{\lambda}(x,t) = \overline{U}_{\frac{1}{4} b \l( 2 - \gamma \r)^2 \lambda}$. Further, if $u_0=1$, then $\int^{\infty}_0 p(t,x,y)dy=1$.
\end{theorem}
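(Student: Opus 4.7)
The plan is to prove this by the standard Lie symmetry construction for second-order parabolic PDEs in two variables, exploiting the fact that the Ricatti equation is precisely the integrability condition for the existence of a nontrivial point symmetry of the required shape.

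First I would write the infinitesimal generator in the form $\mathbf{v}=\xi(x,t)\partial_x+\tau(x,t)\partial_t+\phi(x,t,u)\partial_u$ with $\phi=\sigma(x,t)u$ (linearity of the PDE forces the dependence on $u$ to be linear). Applying Lie's second prolongation to the PDE (\ref{eqtheoexistnontrivsym}), separating coefficients of $u$, $u_x$, $u_{xx}$, $u_t$, and setting each coefficient to zero yields the standard determining system for $\xi,\tau,\sigma$. Under the ansatz that $\tau$ depends only on $t$ and $\xi$ is of the separable form $\xi(x,t)=\dot\tau(t)\,x/(2-\gamma)+\ldots$ (the natural scaling ansatz suggested by the $x^\gamma u_{xx}$ structure), the determining system collapses to a single second-order ODE for $\tau(t)$ plus a compatibility condition on $h(x)=x^{1-\gamma}f(x)$. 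That compatibility condition is exactly the first Ricatti equation stated in the hypothesis; the choice of affine right-hand side $2bAx^{2-\gamma}+B$ corresponds to the ODE $\ddot\tau=0$, which admits the quadratic solution $\tau(t)=t+4\varepsilon t^2$, with $\varepsilon$ serving as the group parameter.

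Next I would integrate the vector field, i.e.\ solve the characteristic system $dx/d\varepsilon=\xi$, $dt/d\varepsilon=\tau$, $du/d\varepsilon=\sigma u$ with initial data $(x,t,u)$ at $\varepsilon=0$. From $\tau(t)=t+4\varepsilon t^2$ one obtains the change of time $t\mapsto t/(1+4\varepsilon t)$; the matching flow in $x$ yields $x\mapsto x/(1+4\varepsilon t)^{2/(2-\gamma)}$, and the multiplier $\sigma$ integrates to the exponential factors involving $x^{2-\gamma}$, $At^2$, and the potential $F(x)=\int f/x^\gamma\,dx$ in the stated form of $\overline U_\varepsilon$. Verifying directly, by substitution into (\ref{eqtheoexistnontrivsym}), that $\overline U_\varepsilon$ defined this way carries solutions to solutions is possible but tedious; the Lie-theoretic derivation just sketched is cleaner, and this is where I expect the main obstacle to lie, namely the bookkeeping in the prolongation computation and in identifying which choice of parameters $A,B$ in the Ricatti constraint corresponds to which solution $\tau(t)$ of the reduced ODE.

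Having established that $\overline U_\varepsilon$ is a symmetry, I would specialise to a stationary analytic solution $u=u_0(x)$. Then $\overline U_\varepsilon(x,t)$ is itself a (nonstationary) solution of (\ref{eqtheoexistnontrivsym}). Evaluating the explicit formula at $t=0$ collapses it to
\[
\overline U_\varepsilon(x,0)=\exp\!\left\{-\frac{4\varepsilon x^{2-\gamma}}{b(2-\gamma)^2}\right\}u_0(x),
\]
so applying the fundamental solution representation (\ref{eqtransformconnect2}) gives
\[
\overline U_\varepsilon(x,t)=\int_0^\infty\exp\!\left\{-\frac{4\varepsilon y^{2-\gamma}}{b(2-\gamma)^2}\right\}u_0(y)\,p(t,x,y)\,dy.
\]
The reparametrisation $\lambda=4\varepsilon/\bigl(b(2-\gamma)^2\bigr)$, equivalently $\varepsilon=\tfrac14 b(2-\gamma)^2\lambda$, yields exactly the claimed identity with $U_\lambda(x,t)=\overline U_{\frac14 b(2-\gamma)^2\lambda}(x,t)$. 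Finally, for the normalisation, take $u_0\equiv 1$ and let $\varepsilon\to 0$: the left-hand side tends to $\int_0^\infty p(t,x,y)\,dy$ while the closed-form expression for $\overline U_0(x,t)$ reduces to $1$, giving $\int_0^\infty p(t,x,y)\,dy=1$. Dominated convergence justifies the passage to the limit under the integral.
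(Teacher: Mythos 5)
Note first that the paper does not prove this theorem at all: it is quoted verbatim from \cite{CraddockLe09} (Theorem 3.1), and the route you sketch --- determining equations for a point symmetry, exponentiation of the generator, evaluation at $t=0$, then the fundamental-solution representation (\ref{eqtransformconnect2}) --- is exactly the route of that source. As an outline it is the right approach, but as a proof it has two genuine gaps. The heart of the theorem is the explicit formula (\ref{eqtheoricatti1Ubar}), i.e.\ that the Ricatti hypothesis on $h$ forces the determining system to admit a symmetry whose multiplier is precisely the stated exponential in $x^{2-\gamma}$, $A t^2$ and $F$. You defer this to ``bookkeeping'', and the piece of the computation you do describe is inconsistent: $\tau(t)=t+4\varepsilon t^{2}$ is not a solution of $\ddot\tau=0$, and the group parameter $\varepsilon$ cannot occur inside the coefficient $\tau$ of the generator --- in the standard computation $\tau$ is a polynomial in $t$ (the relevant generator has $\tau\propto t^{2}$), and $\varepsilon$ enters only when the flow is exponentiated, which is what produces $t\mapsto t/(1+4\varepsilon t)$ and $x\mapsto x/(1+4\varepsilon t)^{2/(2-\gamma)}$. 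Without actually carrying out the prolongation, integrating $du/d\varepsilon=\sigma u$ (or verifying $\overline U_\varepsilon$ by direct substitution), the specific form of the multiplier --- which is the substantive content of the statement --- is asserted rather than proved.

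The second gap is the passage from ``$\overline U_\varepsilon$ is a solution with initial data $e^{-\lambda x^{2-\gamma}}u_0(x)$'' to the integral identity. Invoking (\ref{eqtransformconnect2}) presupposes both that a fundamental solution exists and that this particular solution is represented by it; for these degenerate diffusions on $[0,\infty)$ fundamental solutions need not be unique (boundary behaviour at $x=0$), and solutions of the Cauchy problem need not be unique without growth restrictions. This is precisely why the theorem is phrased as ``there \emph{is} a fundamental solution $p(t,x,y)$ such that\dots'': in \cite{CraddockLe09} the kernel is constructed by inverting the Laplace-type identity supplied by the symmetry and then shown to be a fundamental solution, rather than obtained by plugging a pre-existing $p$ into (\ref{eqtransformconnect2}). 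Your $t=0$ evaluation, the reparametrisation $\varepsilon=\tfrac14 b(2-\gamma)^2\lambda$, and the normalisation argument for $u_0\equiv 1$ (letting $\lambda\downarrow 0$; monotone convergence suffices) are fine, but only once that identification of $\overline U_\varepsilon$ with the integral against a suitably constructed $p$ has been justified.
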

For the remaining two Ricatti equations, (\ref{eqRicattidrift2theoLiesymexistiff}) and (\ref{eqRicattidrift3theoLiesymexistiff}), we refer the reader to Theorems 2.5 and 2.8 in \cite{Craddock09}.

We would now like to illustrate how the method can be used. Consider a squared Bessel process of dimension $\delta$, where $\delta \geq 2$,
\begin{displaymath}
d X_t = \delta dt + 2 \sqrt{X_t} dW_t \, ,
\end{displaymath}
where $X_0 = x >0$. The drift $f(x)=\delta$ satisfies equation (\ref{eqRicattidrift1theoLiesymexistiff}) with $A=0$. Consequently, using Theorem \ref{theoRicattieq1symmetry} with $A=0$ and $u(x,t)=1$, we obtain
\begin{displaymath}
\overline{U}_{\varepsilon}(x,t) = \exp \left\{ - \frac{4 \epsilon x}{b \l( 1 + 4 \varepsilon t\r)} \right\} \l( 1 + 4 \varepsilon t \r)^{- \frac{\delta}{b}} \, ,
\end{displaymath}
where $b=2$. Setting $\varepsilon = \frac{b \lambda}{4}$, we obtain the Laplace transform
\begin{eqnarray*}
U_{\lambda}(x,t) &=& \int^{\infty}_0 \exp \left\{ - \lambda y \right\} p(t,x,y) dy
\\ &=& \exp \left\{ - \frac{x \lambda}{1+ 2 \lambda t} \right\} \l(1 + 2 \lambda t \r)^{- \frac{\delta}{2}} \, ,
\end{eqnarray*}
which is easily inverted to yield
\begin{equation} \label{eqtransdenssqBessel}
p(t,x,y) = \frac{1}{2 t} \l( \frac{x}{y} \r)^{\frac{\nu}{2}} I_{\nu} \l( \frac{\sqrt{x y}}{t} \r) \exp \left\{ - \frac{(x+y)}{2 t} \right\} \, ,
\end{equation}
where $\nu = \frac{\delta}{2}-1$ denotes the index of the squared Bessel process. Equation (\ref{eqtransdenssqBessel}) shows the transition density of a squared Bessel process started at time $0$ in $x$ for being at time $t$ in $y$. Recall that $I_{\nu}$ denotes the modified Bessel function of the first kind. This result, together with the real world pricing formula, (\ref{eqrealworldprice}), allows us to price a wide range of European style and path-dependent derivatives with payoffs of the type $H=f(S^*_{t_1}, S^*_{t_2}, \dots, S^*_{t_d} )$, where $d \geq 1$ and $t_1 , t_2 , \dots , t_d$ are given deterministic times.

By exploiting the tractability of the underlying processes, Lie symmetry methods allow us to design efficient Monte Carlo algorithms, as the following example from \cite{BaldeauxChaPla1} and \cite{BaldeauxChaPla2} shows. We now consider the problem of pricing derivatives on realized variance. Here we define realized variance to be the quadratic variation of the log-index, and we formally compute the quadratic variation of the log-index in the form,
\begin{displaymath}
\left[ \log(S^{\deltas}_{\cdot} ) \right]_T = \int^T_0 \frac{dt}{Y_t} \, .
\end{displaymath}
Recall from Section \ref{secBMApproach} that $Y= \left\{ Y_t \, , \, t \geq 0 \right\}$ is a square-root process whose dynamics are given in equation (\ref{eqsqrootproc}). In particular, we focus on put options on volatility, where volatility is defined to be the square-root of realized variance. We remark that call options on volatility can be obtained via the put-call parity relation in Lemma 4.1 in \cite{BaldeauxChaPla1}. The real-world pricing formula (\ref{eqrealworldprice}) yields the following price for put options on volatility
\begin{equation} \label{eqputonvol}
S^{\deltas}_t E \left( \frac{(K - \sqrt{\frac{1}{T} \int^T_0 \frac{ds}{Y_s} })^+}{S^{\deltas}_T} \bigg| \cA_t \right) \, .
\end{equation}
For computing the expectation in (\ref{eqputonvol}) via Monte Carlo methods, one first needs to have access to the joint density of $(S^{\deltas}_T , \int^T_0 \frac{ds}{Y_s} )$ and subsequently perform the Monte Carlo simulation. Before presenting the relevant result, we recall that $S^{\deltas}_T = S^0_T \alpha^{\deltas}_T Y_T$, i.e. it suffices to have access to the joint distribution of $(Y_T , \int^T_0 \frac{dt}{Y_t} )$. We remark that if we have access to the Laplace transform of $(Y_T , \int^T_0 \frac{dt}{Y_t} )$, i.e.
\begin{equation} \label{eqjointPltrafo}
E \left( \exp \left( - \lambda Y_T - \mu \int^T_0 \frac{dt}{Y_t} \right) \right) \, ,
\end{equation}
then we have, in principle, solved the problem. From the point of view of implementation though, inverting a two-dimensional Laplace transform numerically is expensive. The following result from \cite{CraddockLe09}, see Corollaries 5.8 - 5.9, goes further: In fact the fundamental solution corresponds to inverting the expression in (\ref{eqjointPltrafo}) with respect to $\lambda$, which significantly reduces the computational complexity.

\begin{lemma} \label{lemLaplacejointdistrib} The joint Laplace transform of $Y_T$ and $\int^T_0 \frac{dt}{Y_t} $ is given by
\begin{eqnarray*}
\lefteqn{E \l( \exp \l( - \lambda Y_T - \mu \int^T_0 \frac{1}{Y_t} dt \r) \r)}
\\ &=& \frac{\Gamma(3/2 + \nu/2)}{\Gamma(\nu+1)} \beta x^{-1} \exp \l( \eta \l( T+ x - \frac{x}{\tanh \l( \eta T /2 \r) } \r) \r)
\\ && \frac{1}{\beta \alpha} \exp \l( \beta^2 / ( 2 \alpha)  \r) M_{-k, \nu/2} \l( \frac{\beta^2}{\alpha} \r) \, ,
\end{eqnarray*}
where $\alpha= \eta \l( 1 + \coth (\frac{\eta t}{2}) \r) +\lambda$, $\beta = \frac{\eta \sqrt{x}}{\sinh \l(\frac{\eta t}{2} \r)}$, $\nu=2\sqrt{\frac{1}{4}+2 \mu}$, and $M_{s,r}(z)$ denotes the Whittaker function of the first kind. In \cite{CraddockLe09}, the inverse with respect to $\lambda$ was already performed explicitly and is given as
\begin{eqnarray} \nonumber
p(T,x,y) &=& \frac{\eta}{\sinh \l( \eta T /2 \r)} \l( \frac{y}{x} \r)^{1/2}
\\ && \label{eqLptransformjd} \exp \l( \eta \l( T + x-y - \frac{x+y}{\tanh (\eta T/2)} \r) \r) I_{\nu} \l( \frac{2 \eta \sqrt{x y}}{\sinh \l( \eta T/2 \r)} \r) \, .
\end{eqnarray}
\end{lemma}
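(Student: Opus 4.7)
The plan is to apply the Lie-symmetry/integral-transform machinery of Section \ref{secLieSym} to the square-root process $Y$ defined by (\ref{eqsqrootproc}). By the Feynman-Kac formula, the quantity
\begin{displaymath}
u(x,T) = E_x\!\left[\exp\!\left(-\lambda Y_T - \mu \int_0^T \frac{ds}{Y_s}\right)\right]
\end{displaymath}
is the unique bounded solution of the Cauchy problem
\begin{displaymath}
u_t = \frac{1}{2} x\, u_{xx} + (1-\eta x)\, u_x - \frac{\mu}{x}\, u, \qquad u(x,0) = e^{-\lambda x},
\end{displaymath}
which is of the form (\ref{eqPDEFC1}) with $\gamma=1$, $b=1/2$, $f(x)=1-\eta x$, and $g(x)=\mu/x$. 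Setting $h(x) = x^{1-\gamma} f(x) = 1-\eta x$ and substituting into the drift diagnostic of Theorem \ref{theoexistnontrivsym} yields $\frac{1}{2}\eta^2 x^2 - \eta x + \mu$, which matches the second Ricatti family (\ref{eqRicattidrift2theoLiesymexistiff}) with $A=\eta^2$, $B=-\eta$, $C=\mu$. Hence a nontrivial Lie symmetry group exists, explicitly exhibited by Theorem 2.5 of \cite{Craddock09}.

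Second, I would act that one-parameter symmetry on an appropriate analytic stationary solution $u_0$ of the PDE. Because $g(x)=\mu/x\ne 0$ the trivial choice $u_0\equiv 1$ is inadmissible, but the associated ODE $\frac{1}{2} x u'' + (1-\eta x) u' - (\mu/x) u = 0$ admits explicit analytic solutions in terms of confluent hypergeometric functions. Evaluating the symmetry-transformed solution at $t=0$ and applying the fundamental-solution identity (the second-Ricatti analogue of the final display in Theorem \ref{theoRicattieq1symmetry}) converts the symmetry into an integral-transform relation of Laplace type whose left-hand side is $\int_0^\infty e^{-\lambda y} u_0(y)\, p(T,x,y)\,dy$, where $p(T,x,\cdot)$ is the fundamental solution, i.e. the joint sub-density of $(Y_T, \int_0^T Y_s^{-1}\,ds)$ weighted by $e^{-\mu \int_0^T Y_s^{-1}\,ds}$. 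Collecting the $\sinh$, $\coth$, and exponential factors produced by the explicit symmetry, and absorbing the stationary-solution factors, then rearranges to the Whittaker-function expression of the Lemma, with the identifications $\alpha = \eta(1+\coth(\eta T/2)) + \lambda$, $\beta = \eta\sqrt{x}/\sinh(\eta T/2)$, and $\nu = 2\sqrt{1/4 + 2\mu}$ emerging naturally (the latter from the Bessel-index shift induced by the $\mu/x$ potential).

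For the inversion in $\lambda$: only the factor $(\beta\alpha)^{-1} e^{\beta^2/(2\alpha)} M_{-k,\nu/2}(\beta^2/\alpha)$ depends on $\lambda$, through the linear substitution $\alpha \mapsto \alpha+\lambda$. Viewed as a function of $\alpha$, this is, up to elementary rescalings, a classical Laplace transform of a Bessel-type density: the identity representing $\int_0^\infty e^{-\alpha y} y^{\nu/2} I_\nu(2\beta\sqrt{y})\,dy$ in terms of Whittaker functions (Gradshteyn--Ryzhik 7.622 or equivalent) inverts to produce the $I_\nu$ kernel, which combined with the $\lambda$-independent prefactors gives the $(y/x)^{1/2}\exp(\eta(T+x-y-(x+y)/\tanh(\eta T/2)))\, I_\nu(2\eta\sqrt{xy}/\sinh(\eta T/2))$ form of (\ref{eqLptransformjd}). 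The principal obstacle is not conceptual but computational: the second-Ricatti symmetry of Craddock--Lennox has a cumbersome explicit form, and matching its specialization to the CIR-type drift with the stated Whittaker kernel requires careful bookkeeping of half-integer Whittaker parameters and repeated use of hyperbolic identities, as carried out in detail in \cite{CraddockLe09}, Corollaries 5.8--5.9.
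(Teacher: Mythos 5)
Your proposal is correct and takes essentially the same route as the paper, which states this lemma by quoting Craddock and Lennox (Corollaries 5.8--5.9), whose proof is precisely the Lie-symmetry/integral-transform argument you outline: the Feynman--Kac setup with $\gamma=1$, $b=1/2$, $f(x)=1-\eta x$, $g(x)=\mu/x$, the Riccati diagnostic $\tfrac{1}{2}\eta^2x^2-\eta x+\mu$ matching the second family with $A=\eta^2$, $B=-\eta$, $C=\mu$, the symmetry acting on a nontrivial (confluent-hypergeometric) stationary solution, and the final $\lambda$-inversion via the Laplace--Bessel--Whittaker identity all check out. The only cosmetic remark is that the tabulated transform you want is Gradshteyn--Ryzhik 6.643.2 rather than 7.622, which your ``or equivalent'' already covers.
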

Consequently, to recover the joint density of $(Y_T, \int^T_0 \frac{dt}{Y_t} )$, one only needs to invert a one-dimensional Laplace transform. For further details, we refer the interested reader to \cite{BaldeauxChaPla2}. By gaining access to the relevant joint densities, this example demonstrates that Lie symmetry methods allow us to design efficient Monte Carlo algorithms for challenging finance problems.

\section{Wishart Processes} \label{secWishproc}

Very tractable and highly relevant to finance are models that generalize the previously mentioned MMM. Along these lines, in this section we discuss Wishart processes with a view towards exact simulation. As demonstrated in \cite{Bru91}, Wishart processes turn out to be the multidimensional extensions of squared Bessel processes. However, they also turn out to be affine, see \cite{GouSuf03}, and \cite{GouSuf04}. Prior to the latter two contributions, the literature was focused on affine processes taking values in the Euclidean space, see e.g. \cite{DuffiePanSin00}, and \cite{DuffieFilSch03}. Subsequently, matrix-valued affine processes were studied, see e.g. \cite{CuchieroFilMayTei11}, and \cite{GrasselliTebaldi08}. Since \cite{GouSuf03}, and \cite{GouSuf04}, it has been more widely known that Wishart processes are analytically tractable, since their characteristic function is available in closed form; see also \cite{GnoattoGrasselli11}. In this section, we exploit this fact when we discuss exact simulation of Wishart processes.

Firstly, we fix notation and present an existence result. Wishart processes are $S^+_d$ or $\overline{S^+_d}$ valued, i.e. they assume values in the set of positive definite or positive semidefinite matrices, respectively. This makes them natural candidates for the modeling of covariance matrices, as noted in \cite{GouSuf03}. Starting with \cite{GouSuf03} and \cite{GouSuf04}, there is now a substantial body of literature applying Wishart processes to problems in finance, see \cite{BurasciCiTr10}, \cite{BurasciPoTr10}, \cite{DaFonsecaGrIe08a}, \cite{DaFonsecaGrIe08b}, \cite{DaFonsecaGrTe07}, \cite{DaFonsecaGrTe08}, and \cite{GourierouxMoSu07}. In the current paper we study Wishart processes in a pure diffusion setting. For completeness, we mention that matrix valued processes incorporating jumps have been studied, see e.g. in \cite{BarndorffSt07}, and \cite{LeippoldTr08}. These processes are all contained in the affine framework introduced in \cite{CuchieroFilMayTei11}, where we direct the reader interested in affine matrix valued processes.

In the following, we introduce the Wishart process as described in the work of Grasselli and collaborators; see \cite{DaFonsecaGrTe07} and \cite{GrasselliTebaldi08}. For $\bx \in \psdmatd$, we introduce the $\psdmatd$ valued Wishart process $\bX^{\bix} = \bX = \left\{ \bX_t \, , \, t \geq 0 \right\}$, which satisfies the SDE
\begin{equation} \label{eqWishart}
d \bX_t = \left( \alpha \ba^{\top} \ba + \bb \bX_t + \bX_t \bb^{\top} \right) dt + \left( \sqrt{\bX_t} d \bW_t \ba + \ba^{\top} d \bW^{\top}_t \sqrt{\bX_t} \right) \, ,
\end{equation}
where $\alpha \geq 0$, $\bb \in \matd$, $\ba \in \matd$. Here $\matd$ denotes the set of $d \times d$ matrices taking values in $\Re$. An obvious question to ask is whether equation (\ref{eqWishart}) admits a solution, and, furthermore, if such a solution is unique and strong. For results on weak solutions we refer the reader to \cite{CuchieroFilMayTei11}, and for results on strong solutions to \cite{MayerhoferPfSt11}. We now present a summary of results, which in this form also appeared in \cite{AhdidaAlfonsi10}; see Theorem 1 in \cite{AhdidaAlfonsi10}.

\begin{theorem} \label{theoremexistWish} Assume that $\bx \in \psdmatd$, and $\alpha \geq d-1$, then equation (\ref{eqWishart}) admits a unique weak solution. If $\bx \in \pdmatd$ and $\alpha \geq d+1$, then this solution is strong.
\end{theorem}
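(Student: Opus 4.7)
The plan is to reduce the two claims to general results on affine processes on symmetric cones (for weak existence and uniqueness) and to pathwise SDE theory on the open cone $\pdmatd$ (for the strong statement), with the dimensional thresholds $\alpha \geq d-1$ and $\alpha \geq d+1$ entering as boundary-admissibility and Feller-type conditions respectively. I would structure the argument in three steps.

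First, for weak existence on $\psdmatd$, I would write out the infinitesimal generator of (\ref{eqWishart}),
\[
\mathcal{L} f(\bx) \;=\; \mathrm{tr}\bigl((\alpha\,\ba^{\top} \ba + \bb\bx + \bx\bb^{\top})\, D f(\bx)\bigr) + 2\,\mathrm{tr}\bigl(\bx\, Df(\bx)\, \ba^{\top}\ba\, Df(\bx)\bigr),
\]
observe that both the drift and the quadratic diffusion coefficient are affine in $\bx$, and then verify the admissibility condition at $\partial \psdmatd$: along any rank-one boundary normal the drift must point into the cone, and the least threshold ensuring this at every boundary point is exactly $\alpha \geq d-1$. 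Weak existence for the associated martingale problem then follows from the general existence theory for affine processes on symmetric cones developed in \cite{CuchieroFilMayTei11}.

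Second, for weak uniqueness, I would exploit the affine structure: the conditional Laplace transform $E[\exp(-\mathrm{tr}(\Lambda\bX_t))\mid \bX_0=\bx]$ has the exponential-affine form $\exp(-\mathrm{tr}(\Phi(t,\Lambda)\bx) - \psi(t,\Lambda))$, where $\Phi$ and $\psi$ solve a matrix Riccati system and a linear ODE whose solutions are unique on a domain of $\Lambda$'s rich enough to separate measures on $\psdmatd$. Hence all finite-dimensional distributions are determined, which gives uniqueness in law. For the strong statement, given $\bx\in\pdmatd$ and $\alpha\geq d+1$, I would first argue that $\bX_t$ stays in $\pdmatd$ for all $t\geq 0$. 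Using the identity from \cite{Bru91}, $\det \bX_t$ (which vanishes exactly when $\bX_t$ touches $\partial\pdmatd$) can be related via a time change to a squared Bessel process of dimension $\alpha - d + 1 \geq 2$, and Feller's test then rules out hitting zero in finite time. Once the process is confined to the open cone, $\bx\mapsto \sqrt{\bx}$ is $C^{\infty}$, so the coefficients of (\ref{eqWishart}) are locally Lipschitz; standard localisation yields a unique strong solution up to the exit time from any compact subset of $\pdmatd$, and non-attainment of the boundary promotes this to a global strong solution, with pathwise uniqueness following from the same local Lipschitz estimate.

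The main obstacle is precisely the non-attainment of $\partial\pdmatd$ under $\alpha\geq d+1$. Eigenvalues of $\bX_t$ do not obey individual autonomous SDEs, since they interact through Bru's non-colliding eigenvalue system, so a direct spectral analysis is technically delicate. The cleanest workaround is the determinant argument sketched above, which collapses the multidimensional boundary question to a one-dimensional Feller test and makes transparent why $\alpha\geq d+1$ (the matrix analogue of $\delta\geq 2$ for a squared Bessel process) is the right threshold.
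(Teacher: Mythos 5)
Note first that the paper does not prove this theorem: it is stated explicitly as ``a summary of results'' quoted from Theorem~1 of \cite{AhdidaAlfonsi10}, with the weak-solution theory attributed to \cite{CuchieroFilMayTei11} and the strong-solution theory to \cite{MayerhoferPfSt11}. Your sketch is essentially a reconstruction of that chain of references: affine admissibility on the cone plus the Riccati/Laplace-transform argument for existence and uniqueness in law, and, for the strong part, non-attainment of $\partial\pdmatd$ under $\alpha\geq d+1$ (via Bru's determinant-to-squared-Bessel reduction with index $\alpha-d+1$) combined with local Lipschitz continuity of $\bx\mapsto\sqrt{\bx}$ on the open cone. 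So in substance you are following the same route as the sources the paper leans on, and the thresholds you identify are the right ones.

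Three points in your outline are looser than they should be. First, the heuristic ``the drift must point into the cone along rank-one boundary normals'' is not literally the admissibility condition; a nonnegative drift at the boundary would only give $\alpha\geq 0$. The threshold $d-1$ arises from the interplay between the degenerate diffusion and the boundary geometry, and the precise condition from \cite{CuchieroFilMayTei11} is that the constant drift dominate $(d-1)\,\ba^{\top}\ba$ in the positive semidefinite order, i.e.\ $\alpha\,\ba^{\top}\ba\succeq(d-1)\,\ba^{\top}\ba$, which for $\ba\neq 0$ is exactly $\alpha\geq d-1$. Second, ``non-attainment of the boundary promotes this to a global strong solution'' skips non-explosion: you must also rule out escape to infinity in finite time (linear growth of the drift and sublinear growth of the diffusion coefficient, or conservativeness of the affine process), and the clean way to assemble the pieces is weak existence plus pathwise uniqueness via Yamada--Watanabe. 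Third, the determinant argument as you state it presumes $\ba$ invertible (so that one can reduce to $\ba=\bI_d$ by congruence, and the drift $\bb$ contributes only a $2\,\mathrm{tr}(\bb)\det\bX_t\,dt$ term that does not affect the hitting analysis); degenerate $\ba$ requires the congruence-invariance reduction used in \cite{AhdidaAlfonsi10} and \cite{MayerhoferPfSt11}. These are refinements rather than fatal gaps, but they are exactly the places where the cited proofs do real work.
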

In this paper, we are interested in exact simulation schemes to be used in Monte Carlo methods. Hence weak solutions suffice for our purposes and we assume that $\alpha>d-1$, so that the weak solution is unique. As in \cite{AhdidaAlfonsi10}, we use $WIS_d(\bx,\alpha,\bb,\ba)$ to denote a Wishart process and $\WISt$ for the value of the process at the time point $t$.

We begin with the study of some special cases, which includes an extension of the MMM to the multidimensional case. We use $\bB_t$ to denote an $n \times d$ Brownian motion and set
\begin{equation} \label{eqdefsimpleWishart}
\bX_t = \bB^{\top}_t \bB_t \, .
\end{equation}
Then it can be shown that $\bX = \left\{ \bX_t \, , \, t \geq 0 \right\}$ satisfies the SDE
\begin{displaymath}
d \bX_t = n \bI_d dt + \sqrt{\bX_t} d\bW_t + d \bW^{\top}_t \sqrt{\bX_t} \, , \end{displaymath}
where $\bW_t$ is a $d \times d$ Brownian motion, and $\bI_d$ denotes the $d \times d$ identity matrix. This corresponds to the case where we set
\begin{displaymath}
\ba = \bI_d \, , \, \bb = \vo \, , \, \alpha = n \, .
\end{displaymath}
We now provide the analogous scalar result, showing that Wishart processes generalize squared Bessel processes: Let $\delta \in \mathcal{N}$, and set
\begin{displaymath}
x= \sum^{\delta}_{k=1} (w^k)^2 \, .
\end{displaymath}
Now we set
\begin{equation} \label{eqintdimbess1}
X_t = \sum^{\delta}_{k=1} (W^k_t + w^k)^2 \, .
\end{equation}
Then $X$ can be shown to satisfy the SDE
\begin{displaymath}
d X_t = \delta dt + 2 \sqrt{X_t} d B_t \, ,
\end{displaymath}
where $B= \left\{ B_t\, , \, t \geq 0 \right\}$ is a scalar Brownian motion. This shows that (\ref{eqdefsimpleWishart}) is the generalization of (\ref{eqintdimbess1}). Furthermore, it is also clear how to simulate (\ref{eqdefsimpleWishart}).

Next, we illustrate how Wishart processes can be used to extend the MMM from Section \ref{secBMApproach}. We recall some results pertaining to matrix-valued random variables, see e.g. \cite{GuptaNag}, and \cite{Muirhead82}. We introduce some auxialiary notation. We denote by $\mathcal{M}_{m,n} (\Re)$ the set of all $m \times n$ matrices with entries in $\Re$. Next, we present a one-to-one relationship between vectors and matrices.

\begin{definition} \label{defmatvarstochmatven1to1} Let $\bA \in \mathcal{M}_{m,n}(\Re)$ with columns $\ba_i \in \Re^m$, $i=1,\dots,n$, and define the function $vec: \mathcal{M}_{m,n} (\Re) \rightarrow \Re^{m n }$ via
\begin{displaymath}
vec(\bA) = \left( \begin{array}{c} \ba_1 \\ \vdots \\ \ba_n \end{array} \right) \, .
\end{displaymath}
\end{definition}
We can now define the matrix variate normal distribution.

\begin{definition} \label{defnormdistr} A $p \times n$ random matrix is said to have a matrix variate normal distribution with mean $\bM \in \mathcal{M}_{p,n}(\Re)$ and covariance $\vSigma \otimes \vPsi$, where $\vSigma \in \mathcal{S}^+_p$, $\vPsi \in \mathcal{S}^+_n$, if $vec({\bX}^{\top}) \sim \mathcal{N}_{p n} ( vec({\bM}^{\top}), \vSigma \otimes \vPsi)$, where $\mathcal{N}_{p n}$ denotes the multivariate normal distribution on $\Re^{p n}$ with mean $vec({\bM}^{\top})$ and covariance $\vSigma \otimes \vPsi$. We will use the notation $\bX \sim \mathcal{N}_{p,n}(\bM, \vSigma \otimes \vPsi)$.
\end{definition}
Next, we introduce the Wishart distribution, which we link in the subsequent theorem to the normal distribution.

\begin{definition} \label{defWishartdistrib} A $p \times p$-random matrix $\bX$ in $\mathcal{S}^+_p$ is said to have a noncentral Wishart distribution with parameters $p \in \cN$, $n \geq p$, $\vSigma \in \mathcal{S}^+_p$ and $\vTheta \in \mathcal{M}_p(\Re)$, if its probability density function is of the form
\begin{eqnarray*} \lefteqn{f_{\bX}(\bS)}
\\ &=& \left( 2^{\frac{1}{2} n p} \Gamma_p ( \frac{n}{2} ) det (\vSigma)^{ \frac{n}{2}} \right)^{-1} etr \left( - \frac{1}{2} ( \vTheta + \vSigma^{-1} \bS )  \right)
\\ && det(\bS)^{ \frac{1}{2} ( n - p -1) } \phantom{i}_0 F_1 \left( \frac{n}{2} ; \frac{1}{4} \vTheta \vSigma^{-1} \bS \right)
\end{eqnarray*}
where $\bS \in \mathcal{S}^+_p$ and $\phantom{i}_0 F_1$ is the matrix-valued hypergeometric function, see \cite{GuptaNag}, and \cite{Muirhead82} for a definition. We write
\begin{displaymath}
\bX~\sim~\mathcal{W}_p(n, \vSigma, \vTheta) \, .
\end{displaymath}
\end{definition}
Before stating the next result, recall that scalar non-central chi-squared random variables of integer degrees of freedom, can be constructed via sums of normal random variables; see e.g. \cite{JohnsonKotBal95}. The following result presents the matrix variate analogy.

\begin{theorem} \label{theosquarenormisWish} Let $\bX \sim \mathcal{N}_{p,n}(\bM, \vSigma \otimes \bI_n)$, $n \in \left\{ p, p+1, \dots \right\}$. Then
\begin{displaymath}
\bX {\bX}^{\top} \sim \mathcal{W}_p ( n , \vSigma , \vSigma^{-1} \bM {\bM}^{\top} ) \, .
\end{displaymath}
\end{theorem}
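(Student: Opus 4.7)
The plan is to identify $\bX\bX^\top$ with the noncentral Wishart distribution of Definition \ref{defWishartdistrib} by computing and matching Laplace transforms.

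First, I would unpack Definition \ref{defnormdistr}. The hypothesis $\bX \sim \mathcal{N}_{p,n}(\bM, \vSigma \otimes \bI_n)$ means that $\mathrm{vec}(\bX^\top) \sim \mathcal{N}_{pn}(\mathrm{vec}(\bM^\top), \vSigma \otimes \bI_n)$. Reading covariances off the Kronecker block $\sigma_{ij}\bI_n$ gives $\mathrm{Cov}(X_{ik}, X_{jl}) = \sigma_{ij}\delta_{kl}$, so the columns $\bc_1, \dots, \bc_n$ of $\bX$ are independent with $\bc_k \sim \mathcal{N}_p(\bm_k, \vSigma)$, where $\bm_k$ is the $k$-th column of $\bM$. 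Consequently $\bX\bX^\top = \sum_{k=1}^n \bc_k\bc_k^\top$, a sum of $n$ independent Gaussian outer products.

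Next, for symmetric $\bT$ with $\bI + 2\vSigma\bT$ positive definite, the classical Gaussian quadratic-form identity
\[
E\bigl[\exp(-\bc_k^\top \bT \bc_k)\bigr] = \det(\bI + 2\vSigma\bT)^{-1/2} \exp\bigl(-\bm_k^\top \bT(\bI + 2\vSigma\bT)^{-1} \bm_k\bigr),
\]
together with independence and $\sum_k \bm_k\bm_k^\top = \bM\bM^\top$, yields
\[
E\bigl[\mathrm{etr}(-\bT\bX\bX^\top)\bigr] = \det(\bI + 2\vSigma\bT)^{-n/2} \exp\bigl(-\mathrm{tr}\bigl(\bT(\bI + 2\vSigma\bT)^{-1}\bM\bM^\top\bigr)\bigr).
\]

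It then remains to show that the density of Definition \ref{defWishartdistrib} with $\vTheta = \vSigma^{-1}\bM\bM^\top$ has exactly this Laplace transform. Expanding ${}_0F_1(\tfrac{n}{2}; \tfrac{1}{4}\vTheta\vSigma^{-1}\bS)$ in the zonal-polynomial basis and integrating each term against the remaining weight $\mathrm{etr}(-(\bT + \tfrac{1}{2}\vSigma^{-1})\bS)\det(\bS)^{(n-p-1)/2}$ over $\mathcal{S}^+_p$ is the classical Herz--Constantine computation; the resulting series resums to the expression above. Using the fact that ${}_0F_1$ of a matrix argument depends only on the eigenvalues of that argument reconciles $\vSigma^{-1}\bM\bM^\top$ with its symmetrized companion $\vSigma^{-1/2}\bM\bM^\top\vSigma^{-1/2}$. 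Uniqueness of Laplace transforms on $\mathcal{S}^+_p$ then concludes.

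The main obstacle is the ${}_0F_1$ integration: this is where the analytic content of the noncentral Wishart really lives, since ${}_0F_1$ is produced from an ordinary matrix exponential by an integration over the orthogonal group $O(p)$. A cleaner alternative that sidesteps this step at the top level is to first reduce to $\vSigma = \bI_p$ by setting $\bY = \vSigma^{-1/2}\bX \sim \mathcal{N}_{p,n}(\vSigma^{-1/2}\bM, \bI_p\otimes\bI_n)$, identify $\bY\bY^\top \sim \mathcal{W}_p(n, \bI_p, \vSigma^{-1/2}\bM\bM^\top\vSigma^{-1/2})$ from the sum-of-squares representation, and then apply the affine covariance $\bX\bX^\top = \vSigma^{1/2}\bY\bY^\top\vSigma^{1/2}$ to transport the parameters; the essential ${}_0F_1$ step reappears when verifying the Wishart density in the standardized case.
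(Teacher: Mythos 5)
The paper itself gives no proof of Theorem \ref{theosquarenormisWish}: it is quoted as a standard fact of matrix-variate statistics, with \cite{GuptaNag} and \cite{Muirhead82} cited for the surrounding definitions, so there is no in-paper argument to compare yours against. Judged on its own terms, your outline is correct and is essentially the classical moment-generating-function derivation (Muirhead, Theorems 10.3.2--10.3.3). The reduction of the hypothesis to independent columns $c_k \sim \mathcal{N}_p(m_k,\vSigma)$ is right, the Gaussian quadratic-form identity is stated correctly, and the resulting transform $\det(\bI + 2\vSigma T)^{-n/2}\exp\left(-\mathrm{tr}\left(T(\bI+2\vSigma T)^{-1}\bM\bM^{\top}\right)\right)$ does agree with the Laplace transform of the density in Definition \ref{defWishartdistrib} with $\vTheta=\vSigma^{-1}\bM\bM^{\top}$, after the routine algebra turning $-\frac{1}{2}\vTheta+\frac{1}{2}\vTheta(\bI+2T\vSigma)^{-1}$ into $-\vTheta\, T\vSigma(\bI+2T\vSigma)^{-1}$ and cycling the trace. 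You also correctly flag the two genuine subtleties: the non-symmetric argument $\vTheta\vSigma^{-1}\bS$ is harmless because ${}_0F_1$ depends only on the eigenvalues (equivalently on the symmetrized product), and the analytic core is the Herz--Constantine evaluation of $\int_{\mathcal{S}^+_p}\mathrm{etr}(-Z\bS)\det(\bS)^{(n-p-1)/2}\,{}_0F_1(\frac{n}{2};R\bS)\,d\bS$, which you invoke rather than prove.

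That invocation is the only thing standing between your sketch and a complete proof, and it is of the same nature as the paper's own treatment (deferring to the classical literature); a self-contained argument would require either carrying out the zonal-polynomial integration or deriving the density directly from the law of $\bX\bX^{\top}$ by integrating over the orthogonal group, which is the same analytic content in dual form. Your alternative route via standardization, $\bY=\vSigma^{-1/2}\bX$ followed by the affine transport $\bX\bX^{\top}=\vSigma^{1/2}\bY\bY^{\top}\vSigma^{1/2}$, is also sound and correctly reproduces the noncentrality parameter $\vSigma^{-1}\bM\bM^{\top}$, though, as you note, it merely relocates rather than removes the ${}_0F_1$ step.
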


\section{Bivariate MMM}

Theorem \ref{theosquarenormisWish} is now employed to extend the MMM to a bivariate case. We consider exchange rate options, and follow the ideas from \cite{HeathPlaten05}. The GOP denominated in units of the domestic currency is denoted by $S^a$, and the GOP denominated in the foreign currency by $S^b$. An exchange rate at time $t$ can be expressed in terms of a ratio of two GOP denominations. Then one would pay at time $t$, $\frac{S^a_t}{S^b_t}$ units of currency $a$ to obtain one unit of the foreign currency $b$. As the domestic currency is indexed by $a$, the price of, say, a call option with maturity $T$ on the exchange rate can be expressed via the real world pricing formula (\ref{eqrealworldprice}) as:
\begin{equation} \label{eqpriceFXcall}
S^a_0 E \l( \frac{\l( \frac{S^a_T}{S^b_T} - K \r)^+}{S^a_T} \r) \, .
\end{equation}
We now discuss a bivariate extension of the MMM from Section \ref{secBMApproach}, which is still tractable, as we can employ the non-central Wishart distribution to compute (\ref{eqpriceFXcall}). For $k \in \left\{ a , b \right\}$, we set
\begin{displaymath}
S^k_t = S^{0,k}_t \barS^k_t \, ,
\end{displaymath}
where $S^{0,k}_t = \exp \{ r_k t \}$, $S^{0,k}_0 = 1$, so $S^{0,k}$ denotes the savings account in currency $k$, which for simplicity is assumed to be a deterministic exponential function of time. As for the stylized MMM, we model the discounted GOP, $\bar{S}^k_t$, denominated in units of the $k$th savings account, $S^{0,k}_t$, as a time-changed squared Bessel process of dimension four. We introduce the $2 \times 4$ matrix process $\bX=\left\{ \bX_t \, , \, t \geq 0 \right\}$ via
\begin{displaymath}
\bX_t = \left[ \begin{array}{cccc} \l( W^{1,1}_{\varphi^1(t)} + w^{1,1} \r) & \l( W^{2,1}_{\varphi^1(t)} + w^{2,1} \r) & \l( W^{3,1}_{\varphi^1(t)} + w^{3,1} \r) & \l( W^{4,1}_{\varphi^1(t)} + w^{4,1} \r) \\ \l( W^{1,2}_{\varphi^2(t)} + w^{1,2} \r) & \l( W^{2,2}_{\varphi^2(t)} + w^{2,2} \r) & \l( W^{3,2}_{\varphi^2(t)} + w^{3,2} \r) & \l( W^{4,2}_{\varphi^2(t)} + w^{4,2} \r) \end{array} \right] \, .
\end{displaymath}
The processes $W^{i,1}_{\varphi^1}$, $i=1,\dots,4$, denote independent Brownian motions, subject to the deterministic time-change
\begin{displaymath}
\varphi^1(t) = \frac{\alpha^1_0}{4 \eta^1} \l( \exp \{ \eta^1 t \} - 1 \r) = \frac{1}{4} \int^t_0 \alpha^1_s ds \, ,
\end{displaymath}
c.f. Section \ref{secBMApproach}. Similarly, also $W^{i,2}_{\varphi^2}$, $i=1,\dots,4$, denote independent Brownian motions, subject to the deterministic time change
\begin{displaymath}
\varphi^2(t) = \frac{\alpha^2_0}{4 \eta^2} \l( \exp \{ \eta^2 t \} -1 \r) = \frac{1}{4} \int^t_0 \alpha^2_s ds \, .
\end{displaymath}
Now, consider the process $\bY=\left\{ \bY_t \, , \,  t \geq 0 \right\}$, which assumes values in $S^+_2$, and is given by
\begin{displaymath}
\bY_t := \bX_t {\bX}^{\top}_t \, , \, t \geq 0 \, ,
\end{displaymath}
which yields
\begin{eqnarray*}
\lefteqn{\bY_t =}
\\& \left[ \begin{array}{cc} \sum^4_{i=1} \l( W^{i,1}_{\varphi^1(t)} + w^{i,1} \r)^2 & \sum^4_{i=1} \sum^2_{j=1} \l( W^{i,j}_{\varphi^j(t)} + w^{i,j} \r)  \\ \sum^4_{i=1} \sum^2_{j=1} \l( W^{i,j}_{\varphi^j(t)} + w^{i,j} \r)  & \sum^4_{i=1} \l( W^{i,2}_{\varphi^2(t)} + w^{i,2} \r)^2 \end{array} \right] \, .
\end{eqnarray*}
We set
\begin{displaymath}
\barS^{a}_t = Y^{1,1}_t \, ,
\end{displaymath}
and
\begin{displaymath}
\barS^b_t = Y^{2,2}_t \, ,
\end{displaymath}
so we use the diagonal elements of $\bY_t$ to model the GOP in different currency denominations. Next, we introduce the following dependence structure: The Brownian motions $W^{i,1}$ and $W^{i,2}$, $i=1,\dots,4$, covary as follows,
\begin{equation}\label{eqdefdependencetwodimMMM}
\langle W^{i,1}_{\varphi^1 (\cdot)} , W^{i,2}_{\varphi^2 (\cdot)}  \rangle_t = \frac{\varrho}{4} \int^t_0 \sqrt{ \alpha^1_s \alpha^2_0 } ds, i=1,\dots,4 \, ,
\end{equation}
where $-1 < \varrho <1$. The specification (\ref{eqdefdependencetwodimMMM}) allows us to employ the non-central Wishart distribution; we work through this example in detail, as it illustrates how to extend the stylized MMM to allow for a non-trivial dependence structure, but still exploit the tractability of the Wishart distribution. We recall that $vec({\bX}^{\top}_T)$ stacks the two columns of ${\bX}^{\top}_T$, hence
\begin{displaymath}
vec( {\bX}^{\top}_T ) = \left[ \begin{array}{c} \l( W^{1,1}_{\varphi^1(T)} + w^{1,1} \r) \\ \vdots \\ \l( W^{4,1}_{\varphi^1(T)} + w^{4,1} \r) \\ \l( W^{1,2}_{\varphi^2(T)} + w^{1,2} \r) \\ \vdots \\ \l( W^{4,2}_{\varphi^2(T)} + w^{4,2} \r) \end{array}  \right] \, .
\end{displaymath}
It is easily seen that the mean matrix $\bM$ of $vec(X^{\top}_T)$ satisfies
\begin{equation} \label{eqmeanmatrix}
vec \l( {\bM}^{\top} \r) = \left[ \begin{array}{c} w^{1,1} \\ \vdots \\ w^{4,1} \\ w^{1,2} \\ \vdots \\ w^{4,2}  \end{array} \right]
\end{equation}
and the covariance matrix of $vec({\bX}^{\top}_T)$ is given by
\begin{equation} \label{eqcovarmatrix}
\vSigma \otimes \bI_4 = \left[ \begin{array}{cc} \Sigma^{1,1} \bI_4 & \Sigma^{1,2} \bI_4 \\ \Sigma^{2,1} \bI_4 & \Sigma^{2,2} \bI_4 \end{array} \right] \, ,
\end{equation}
where $\vSigma$ is a $2 \times 2$ matrix with $\Sigma^{1,1} = \varphi^1(T)$, $\Sigma^{2,2} = \varphi^2(T)$, and
\begin{displaymath}
\Sigma^{1,2} = \Sigma^{2,1} = \frac{\varrho}{4}\int^t_0 \sqrt{ \alpha^1_s \alpha^2_s }ds \, .
\end{displaymath}
We remark that assuming $-1 < \varrho < 1$ results in $\vSigma$ being positive definite. It now immediately follows from Theorem \ref{theosquarenormisWish} that
\begin{displaymath}
\bX_T {\bX}^{\top}_T \sim W_2 \l( 4 , \vSigma , \vSigma^{-1} \bM \bM ^{\top} \r) \, ,
\end{displaymath}
where $\bM$ and $\vSigma$ are given in equations (\ref{eqmeanmatrix}) and (\ref{eqcovarmatrix}), respectively. Recall that we set
\begin{eqnarray*}
\bY_t &=& \bX_t {\bX}^{\top}_t \, ,
\\ \barS^a_t  &=& Y^{1,1}_t \, ,
\\ \barS^b_t &=& Y^{2,2}_t \, ,
\end{eqnarray*}
hence we can compute (\ref{eqpriceFXcall}) using
\begin{displaymath}
E \l( f (\bY_T) \r) \, ,
\end{displaymath}
where $f : S^+_2 \rightarrow \Re$ is given by
\begin{displaymath}
f (y) = \frac{\l( \frac{\exp \{ r_1 T \} y^{1,1} }{ \exp \{ r_2 T \} y^{2,2} } - K \r)^+}{\exp \{ r_1 T \} y^{1,1}} \, ,
\end{displaymath}
for $y \in S^+_2$, and $y^{i,i}$, $i=1,2$, are the diagonal elements of $y$, and the probability density function of $\bY_T$ is given in Definition \ref{defWishartdistrib}.

We now discuss further exact simulation schemes for Wishart processes, where we rely on \cite{AhdidaAlfonsi10} and \cite{BenabidBeEl10}. For integer valued parameters $\alpha$ in (\ref{eqWishart}), we have the following exact simulation scheme, which generalizes a well-known result from the scalar case, linking Ornstein-Uhlenbeck and square-root processes. In particular, this lemma shows that, in principle, certain square-root processes can be simulated using Ornstein-Uhlenbeck processes.
\begin{lemma} Let $A>0$, $Q>0$, and define the SDEs
\begin{displaymath}
dX^i_t = - A X^i_t dt + Q dW^i_t \, ,
\end{displaymath}
for $i= 1, \dots , \beta$, where $\beta \in \mathcal{N}$, $W^1, W^2, \dots, W^{\beta}$ are independent Brownian motions. Then
\begin{displaymath}
Z_t = \sum^{\beta}_{i=1} (X^i_t)^2
\end{displaymath}
is a square-root process of dimension $\beta$, whose dynamics are characterised by an SDE
\begin{displaymath}
d Z_t = ( \beta Q^2 - 2 A Z_t ) dt + 2 Q \sqrt{Z_t} dB_t \, ,
\end{displaymath}
where $B$ is a resulting Brownian motion.
\end{lemma}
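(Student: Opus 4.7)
The plan is a direct application of It\^o's formula to each squared Ornstein-Uhlenbeck coordinate, summing to identify the drift and the martingale part of $Z$, and then invoking L\'evy's characterization to extract the driving Brownian motion $B$.

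First I would apply It\^o's formula to $(X^i_t)^2$ for each $i\in\{1,\dots,\beta\}$. Since $d\langle X^i\rangle_t = Q^2\,dt$, one gets
\begin{displaymath}
d(X^i_t)^2 = 2 X^i_t\,dX^i_t + Q^2\,dt = \bigl(Q^2 - 2A(X^i_t)^2\bigr)dt + 2Q X^i_t\,dW^i_t \, .
\end{displaymath}
Summing over $i$ and using the definition of $Z_t$ yields
\begin{displaymath}
dZ_t = \bigl(\beta Q^2 - 2A Z_t\bigr)dt + 2Q \sum_{i=1}^{\beta} X^i_t\,dW^i_t \, ,
\end{displaymath}
which already identifies the drift coefficient claimed in the lemma.

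Next I would analyze the local martingale $M_t := 2Q\sum_{i=1}^{\beta}\int_0^t X^i_s\,dW^i_s$. By independence of the $W^i$, its quadratic variation is
\begin{displaymath}
d\langle M\rangle_t = 4Q^2 \sum_{i=1}^{\beta} (X^i_t)^2\,dt = 4Q^2 Z_t\,dt \, .
\end{displaymath}
On the set $\{Z_t>0\}$ I would define $dB_t = (2Q\sqrt{Z_t})^{-1}dM_t$; the process $B$ then has quadratic variation $t$, and by L\'evy's characterization it is a Brownian motion, producing the diffusion coefficient $2Q\sqrt{Z_t}$ as required.

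The main obstacle, and essentially the only delicate point, is handling the event $\{Z_t=0\}$, where the formal division by $\sqrt{Z_t}$ is undefined. The standard remedy is to enlarge the underlying probability space by an auxiliary Brownian motion $\tilde B$ independent of $(W^1,\dots,W^\beta)$ and to set
\begin{displaymath}
B_t = \int_0^t \mathbf{1}_{\{Z_s>0\}}\frac{1}{2Q\sqrt{Z_s}}\,dM_s + \int_0^t \mathbf{1}_{\{Z_s=0\}}\,d\tilde B_s \, ,
\end{displaymath}
which is a continuous local martingale with $\langle B\rangle_t=t$ and hence a Brownian motion by L\'evy's theorem. Since $M$ vanishes on $\{Z_s=0\}$ (because all $X^i_s=0$ there), one has $dM_t = 2Q\sqrt{Z_t}\,dB_t$ identically, so the announced SDE for $Z$ follows.
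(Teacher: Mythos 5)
Your proposal is correct and follows the same route as the paper, whose entire proof is the one-line remark that the result ``follows immediately from the It\^o formula''; your computation of $d(X^i_t)^2$, the summation, and the identification of the drift are exactly that argument spelled out. The additional care you take in constructing $B$ via L\'evy's characterization, including the auxiliary Brownian motion on the set $\{Z_t=0\}$, is a detail the paper suppresses but is handled correctly in your write-up.
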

\begin{proof} The proof follows immediately from the It\^o-formula. 
\qed
\end{proof}
This result is easily extended to the Wishart case, for integer valued $\alpha$, see Section 1.2.2 in \cite{BenabidBeEl10}. We define
\begin{equation} \label{eqWishartfromOU}
\bV_t = \sum^{\beta}_{k=1} \bX_{k,t} \bX^{\top}_{k,t} \, ,
\end{equation}
where
\begin{equation} \label{eqvecOUforWIshart}
d \bX_{k,t} = A \bX_{k,t} dt + \bQ^{\top} d \bW_{k,t} \, , k=1, \dots , \beta \, ,
\end{equation}
where $A \in \matd$, $\bX_t \in \Re^d$, $\bQ \in \matd$, $\bW_k \in \Re^d$, so that $\bV_t \in \matd$. The following lemma gives the dynamics of $\bV= \left\{ \bV_t \, , \, t \geq 0 \right\}$.

\begin{lemma} \label{lemdynamicsWishartfromOU} Assume that $\bV_t$ is given by equation (\ref{eqWishartfromOU}), where $\bX_t$ satisfies equation (\ref{eqvecOUforWIshart}). Then
\begin{displaymath}
d \bV_t = \left( \beta \bQ^{\top} \bQ + A \bV_t + \bV_t A^{\top} \right) dt + \sqrt{\bV_t} d \bW_t \bQ + \bQ^{\top} d\bW^{\top}_t \sqrt{\bV_t} \, ,
\end{displaymath}
where $\bW = \left\{ \bW_t \, , \, t \geq 0 \right\}$ is a $d \times d$ matrix valued Brownian motion that is determined by
\begin{displaymath}
\sqrt{\bV_t} d \bW_t = \sum^{\beta}_{k=1} \bX_{k,t} d \bW^{\top}_{t,k} \, .
\end{displaymath}
\end{lemma}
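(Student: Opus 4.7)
The plan is to apply the Itô product rule to each rank-one summand $\bX_{k,t}\bX_{k,t}^\top$ and then sum over $k=1,\ldots,\beta$. For a single $k$, the product rule gives
\begin{displaymath}
d(\bX_{k,t}\bX_{k,t}^\top) = (d\bX_{k,t})\bX_{k,t}^\top + \bX_{k,t}(d\bX_{k,t})^\top + d\langle \bX_k,\bX_k^\top\rangle_t.
\end{displaymath}
Substituting the vector OU dynamics (\ref{eqvecOUforWIshart}) into the first two terms produces $A\bX_{k,t}\bX_{k,t}^\top\,dt + \bQ^\top d\bW_{k,t}\bX_{k,t}^\top$ and its transpose. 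Because $\bW_{k,t}$ is a standard $d$-dimensional Brownian motion, $d\bW_{k,t}\,d\bW_{k,t}^\top = \bI_d\,dt$, so the quadratic-variation term collapses to $\bQ^\top\bQ\,dt$.

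Summing over $k$ and invoking the definition $\bV_t=\sum_k \bX_{k,t}\bX_{k,t}^\top$ immediately yields the drift $A\bV_t + \bV_t A^\top + \beta\bQ^\top\bQ$, matching the claim. The remaining (martingale) contribution is
\begin{displaymath}
\bQ^\top\!\left(\sum_{k=1}^\beta d\bW_{k,t}\,\bX_{k,t}^\top\right) + \left(\sum_{k=1}^\beta \bX_{k,t}\,d\bW_{k,t}^\top\right)\!\bQ,
\end{displaymath}
so the task reduces to identifying the $d\times d$ matrix-valued local martingale $\bN_t:=\sum_k \int_0^t \bX_{k,s}\,d\bW_{k,s}^\top$ with an expression of the form $\sqrt{\bV_t}\,d\bW_t$ for a single $d\times d$ matrix Brownian motion $\bW_t$.

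For that identification I would compute the entrywise quadratic covariations of $\bN_t$: since the $\bW_{k,\cdot}$ are independent standard $d$-dimensional Brownian motions, a direct calculation gives $d\langle N^{ij},N^{kl}\rangle_t = V^{ik}_t\,\delta_{jl}\,dt$. This is exactly the covariation structure one would obtain from $\sqrt{\bV_t}\,d\bW_t$ with $\bW_t$ a standard matrix Brownian motion, so L\'evy's characterization together with a matrix-valued martingale representation theorem produces the desired $\bW$.

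The main obstacle is the last step when $\beta<d$: then $\bV_t$ is rank-deficient, $\sqrt{\bV_t}$ is not invertible, and one cannot simply define $d\bW_t:=\sqrt{\bV_t}^{-1}\,d\bN_t$. The standard remedy is to enlarge the probability space by an independent auxiliary $d\times d$ Brownian motion and set $d\bW_t$ to equal the Moore--Penrose image of $d\bN_t$ under $\sqrt{\bV_t}^{+}$ on the range of $\sqrt{\bV_t}$ and the auxiliary increment on the orthogonal complement. Checking via L\'evy's characterization that the resulting $\bW_t$ has $d\bW_t\,d\bW_t^\top=\bI_d\,dt$ and $\sqrt{\bV_t}\,d\bW_t=d\bN_t$ completes the proof; this is the exact matrix analogue of the usual construction behind the scalar representation of a squared Bessel process as $d\bar{Z}_t = \sqrt{Z_t}\,dB_t$ from independent Ornstein--Uhlenbeck components.
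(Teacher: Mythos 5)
Your proof is correct, and it follows the route the paper intends: the paper itself gives no argument for this lemma beyond citing Section 1.2.2 of \cite{BenabidBeEl10} and the remark (made for the scalar analogue) that the result ``follows immediately from the It\^o-formula,'' which is exactly your computation of the drift $\beta\bQ^{\top}\bQ + A\bV_t + \bV_t A^{\top}$ and the martingale part $\sum_k \bX_{k,t}\,d\bW_{k,t}^{\top}\bQ$ plus its transpose. Your additional care in identifying $\sum_k \bX_{k,t}\,d\bW_{k,t}^{\top}$ with $\sqrt{\bV_t}\,d\bW_t$ --- matching the covariations $d\langle N^{ij},N^{kl}\rangle_t = V^{ik}_t\delta_{jl}\,dt$ and, when $\beta<d$ so that $\bV_t$ is singular, constructing $\bW$ via the Moore--Penrose pseudoinverse together with an independent auxiliary Brownian motion and L\'evy's characterization --- is exactly the standard (Bru-type) construction that the paper and its reference leave implicit, so it is a welcome completion rather than a divergence.
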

Finally, we remind the reader that vector-valued Ornstein-Uhlenbeck processes can be simulated exactly, see e.g. Chapter 2 in \cite{PB10}.

For the general case, we refer the reader to \cite{AhdidaAlfonsi10}. In that paper, a remarkable splitting property of the infinitesimal generator of the Wishart process was employed to come up with an exact simulation scheme for Wishart processes without any restriction on the parameters. Furthermore, in \cite{AhdidaAlfonsi10} higher-order discretization schemes for Wishart processes and second-order schemes for general affine diffusions on positive semidefinite matrices were presented. These results emphasize that Wishart processes are suitable candidates for financial models, since exact simulation schemes are readily available.

\section{Conclusion} \label{secconc}

In this paper, we discussed classes of stochastic processes for which exact simulation schemes are available. In the one-dimensional case, our first theorem gives access to explicit transition densities via Lie symmetry group results. In the multidimensional case the probability law of Wishart processes is described explicitly. When considering applications in finance, one needs a framework that can accommodate these processes as asset prices, in particular, when they generate strict local martingales. We demonstrated that the benchmark approach is a suitable framework for these processes and allows to systematically exploit the tractability of the models described. For long dated contracts in finance, insurance and for pensions the accuracy of the proposed simulation methods is extremely important.

\end{document}